\newtheorem{theorem}{Theorem}[section]
\newtheorem{proposition}[theorem]{Proposition}
\theoremstyle{definition}
\newtheorem{definition}[theorem]{Definition}
\theoremstyle{remark}
\numberwithin{equation}{section}
\newcommand{\diam}{\mathrm{diam}}
\newcommand{\R}{\mathbb{R}}
\newcommand{\N}{\mathbb{N}}
\newcommand{\X}{\mathrm{X}}
\newcommand{\Y}{\mathrm{Y}}
\newcommand{\B}{\mathbf{B}}
\renewcommand{\S}{\mathbf{S}}
\newcommand{\vep}{\varepsilon}
\begin{document}

\title[Note on Bishop-Phelps-Bollob\'as property for operators]{Note on a kind of Bishop-Phelps-Bollob\'as property for operators}

\author{Jarno Talponen}
\address{University of Eastern Finland, Department of Physics and Mathematics, Box 111, FI-80101 Joensuu, Finland}
\email{talponen@iki.fi}

\keywords{Banach space, operator, Bishop-Phelps-Bollobas property for operators, norm-attaining, uniform convexity, numerical range}
\subjclass[2010]{Primary 46B10, 46B04; Secondary 47A12}
\date{\today}

\begin{abstract}
We study a Bishop-Phelps-Bollob\'as type property for Banach space operators introduced by Dantas (2017). In that paper there is a local and a global version of a natural property which is somewhat similar but simpler compared to the Bishop-Phelps-Bollob\'as type property for operators studied in Acosta et al. (2008). Here we characterize the mentioned local property in the setting with strictly convex domain spaces and compact operators. We show that the local property implies 
that the domain space has strong convexity properties. 
\end{abstract}

\maketitle

\section{Introduction}
The classical Bishop-Phelps theorem states that the norm-attaining functionals are dense in the dual space. Recall that 
$x^* \in \X^*$, an element in the dual space of a Banach space $\X$, is \emph{norm-attaining} if there is $x\in\X$, $\|x\|=1$, such that $\| x^* \| = x^* (x)$. There is a refined version of the classical Bishop-Phelps theorem where one 
has a uniform, even quantitative control, not only on the distance of a given functional to a norm-attaining functional, but also for the witnessing elements $x$ in a suitable sense. This well-known principle is known as the Bishop-Phelps-Bollob\'as theorem appearing in \cite{bollobas}.
Acosta et al. \cite{AAGM} studied the following related property for operators:
\begin{definition}
A pair of Banach spaces $(\X, \Y)$ satisfies the Bishop-Phelps-Bollob\'as property (BPBp) when, given $\vep>0$, there exists $\eta(\vep) >0$ such that whenever $T\in \S_{L(\X,\Y)}$ and $x_0 \in \S_\X$ are such that 
\[\|T(x_0 )\| >1- \eta(\vep)\]
there are $S\in \S_{L(\X,\Y)}$ and $x_1 \in \S_\X$ such that 
\[\|S(x_1 ) \|=1,\quad \|x_1 -x_0 \| <\vep \quad\text{and}\quad \|S-T\|<\vep.\] 
\end{definition}    
\noindent (Some standard notations are recalled in Section \ref{sect: prelim}.)

Thus, in the above definition we allow for small perturbations of the given operator $T$, following the spirit of the Bishop-Phelps-Bollob\'as theorem. It is also natural to ask if the range of 
$T^* \colon \Y^* \to \X^* $ contains many norm-attaining functionals for some fixed operator $T\colon X \to \Y$. 
The genericity of the norm-attaining functionals is investigated in \cite{bourgain}. 
Dantas (2017) \cite{dantas} developes the BPBp analysis to such a direction with $2$ versions of a properties
involving a kind of BPBp for unperturbed operators. There is a weaker local version and a stronger global version of the property
and both properties have convexity implications on the space in question.
The latter property implies that the space $\X$ in question is in fact uniformly convex (cf. \cite{kim}). 
The paper by Dantas is mainly concentrated in the stronger property and here we carry on the analysis regarding the 
weaker Property $1$ of a pair $(\X,\Y)$ of Banach spaces, as it is called in the paper.
Property $1$ states that for any given $T \in \S_{L(\X,\Y)}$ and $\vep>0$ there is $\eta(\vep,T)>0$ such that whenever $x_0 \in \S_{\X}$ 
satisfies 
\[\|T (x_0 )\| > 1-  \eta(\vep,T),\]
there is $x_1 \in \S_\X$ such that 
\[\| T(x_1 )\|=1\quad \text{and}\quad \|x_1 - x_0 \|<\vep.\]

If the above statement holds for all compact operators $T\in \S_{L(\X,\Y)}$ then the pair $(\X, \Y)$ has Property $1$ for compact operators.

Our aim here is to further elucidate Property $1$. Since these properties fundamentally involve convexity, we will study 
them in a strictly convex setting. We will characterize the compact operator version of Property $1$ for strictly convex spaces $\X$. It turns out that that although Property $1$ of $(\X , \Y)$ does not imply the uniform convexity of $\X$, it comes somewhat close to doing that.

\subsection{Preliminaries}\label{sect: prelim}

We denote by $\X$ and $\Y$ real Banach spaces where we exclude the trivial space, consisting of the origin.
Often we impose these conventions implicitly, hence we do not repeat the assumptions on $\X$ and $\Y$ in each result
separately. 

The closed unit ball and the unit sphere of $\X$ are $\B_\X$ and $\S_\X$ respectively.
The space of bounded linear operators $T \colon \X \to \Y$ is denoted by $L(\X, \Y)$. 

See e.g. \cite{FA_book} and \cite{talponen} for general background information on Banach spaces theory and 
for discussion on the extremal structure of unit balls in Banach spaces.
The diameter of a subset $A\subset \X$ is 
\[\diam(A):= \sup_{x,y\in A} \|x-y\|.\]
Given a Banach space $\X$, a slice of its unit ball is defined as follows:
\[S_{f,\alpha} := \{x\in \B_\X \colon 1-\alpha < f(x) \}\] 
where $f\in \S_{\X^*}$ and $0<\alpha<1$.
If the norm of a Banach space is Fr\'echet differentiable away from the origin then the space is said to be 
Fr\'echet smooth. Similar convention holds for Gateaux smoothness. We will apply the related Smulyan
lemma frequently (see \cite{FA_book}).

\section{Results}

\begin{theorem}\label{thm: comp}
Let $\X$ and $\Y$ be Banach spaces and suppose that $\X^*$ is Fr\'echet smooth. Then $(\X,\Y)$ has Property $1$ for compact operators.  
\end{theorem}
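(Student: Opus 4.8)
The plan is to first distill two geometric consequences of the hypothesis — that $\X$ is reflexive and that it has the Kadec--Klee (Radon--Riesz) property — and then settle Property $1$ for a fixed compact operator by a short weak-compactness argument.

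I would begin by showing that Fr\'echet smoothness of $\X^*$ forces $\X$ to be reflexive. Fix $f\in\S_{\X^*}$. Applying the Smulyan lemma to the dual norm at $f$, the slices $\{\Phi\in\B_{\X^{**}}\colon\Phi(f)>1-\delta\}$ have diameter tending to $0$ as $\delta\downarrow0$, so there is a single point $\Phi_f$ lying in all of them, necessarily with $\|\Phi_f\|=\Phi_f(f)=1$. Since $\|f\|=\sup_{x\in\B_\X}f(x)$, one can take $x_n\in\B_\X$ with $f(x_n)\to1$, and then their canonical images converge in norm to $\Phi_f$; as the canonical copy of $\X$ is norm-closed in $\X^{**}$, this puts $\Phi_f$ in $\X$. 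Hence every $f\in\S_{\X^*}$ attains its norm on $\B_\X$, and James' theorem gives reflexivity. A companion application of the Smulyan lemma — now to the space $\X^*$ at the point $f$, using $\X^{**}=\X$ just obtained — yields the Kadec--Klee property: if $x_n\to x$ weakly with $\|x_n\|=\|x\|=1$ and $f\in\S_{\X^*}$ norms $x$, then $f(x_n)\to1=\|f\|$, so Fr\'echet differentiability of the dual norm at $f$ forces $(x_n)$ to converge in norm, the limit being $x$.

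With these two facts available, fix a compact $T\in\S_{L(\X,\Y)}$ and $\vep>0$; I would establish the existence of $\eta=\eta(\vep,T)$ by contradiction. Put $M:=\{x\in\S_\X\colon\|Tx\|=1\}$. If no such $\eta$ works, there are $x_n\in\S_\X$ with $\|Tx_n\|\to1$ but $\dist(x_n,M)\ge\vep$ for all $n$. Reflexivity lets me pass to a weakly convergent subsequence $x_n\to x^*\in\B_\X$; compactness of $T$ then gives $Tx_n\to Tx^*$ in norm, so $\|Tx^*\|=1$, and since $\|T\|=1$ this forces $\|x^*\|=1$, that is, $x^*\in M$ (so $M\ne\emptyset$). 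But $\|x_n\|=\|x^*\|=1$ with $x_n\to x^*$ weakly, so the Kadec--Klee property gives $x_n\to x^*$ in norm, contradicting $\dist(x_n,M)\ge\vep$. Thus $\eta$ exists, and whenever $\|Tx_0\|>1-\eta$ we get $\dist(x_0,M)<\vep$ and may choose $x_1\in M$ with $\|x_1-x_0\|<\vep$, which is exactly what Property $1$ demands. Since $T$ was an arbitrary compact element of $\S_{L(\X,\Y)}$, the conclusion follows.

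The delicate part should be the first step — pinning down the Smulyan-lemma duality precisely, especially the Goldstine-type passage from $\B_{\X^{**}}$ down to $\B_\X$, which is exactly what is responsible for reflexivity. After that, the argument is routine; note in particular that compactness of $T$ is genuinely needed at the last step: a diagonal operator on $\ell_2$ whose norm is not attained shows Property $1$ can fail for a general bounded operator even when $\X$ is a Hilbert space (and hence $\X^*$ is Fr\'echet smooth).
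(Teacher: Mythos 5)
Your proof is correct, but it is packaged differently from the paper's. The paper also argues by contradiction, yet it uses the compactness of $T$ at the very start: from $\|Tz_k\|\to 1$ it extracts a subsequence with $Tz_{k_j}\to y_0\in\S_\Y$, picks $y^*\in\S_{\Y^*}$ norming $y_0$, and applies the Smulyan lemma exactly once, to the single functional $x^*=y^*\circ T\in\S_{\X^*}$, which the points $z_{k_j}$ asymptotically norm; Fr\'echet smoothness of $\X^*$ then forces $z_{k_j}\to x$ in norm, where $x$ is the norm-attaining point of $x^*$ (reflexivity is quoted there, not proved), and $\|Tx\|=1$ gives the contradiction. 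You instead distill the hypothesis into two reusable facts, reflexivity (which you prove via Smulyan plus James) and the Kadec--Klee property, and then run a weak-compactness argument: weakly convergent subsequence, complete continuity of the compact operator, Kadec--Klee to upgrade weak to norm convergence. Your route invokes heavier machinery (James, Eberlein--\v{S}mulian) but isolates a slightly more general intermediate statement -- reflexivity plus Kadec--Klee already yields Property $1$ for compact (indeed completely continuous) operators, with no smoothness or strict convexity needed -- whereas the paper's argument is leaner, never extracting a weak limit and using Smulyan only at the pulled-back functional $y^*\circ T$. Your closing remark is also sound: a diagonal, norm non-attaining operator on $\ell_2$ is a simpler witness than the paper's example that compactness cannot be dropped from the statement.
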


The Fr\'echet smoothness of $\X^*$ implies reflexivity of the spaces and it is satisfied for instance if $\X$ is a reflexive LUR space.

The statement of the result does not remain true general operators. For instance, we may choose $\X$ to be a reflexive LUR space 
such that $(\X ,\ell^2)$ fails Property $1$. Such an example is 
\[\X = \ell^1 \oplus_2 \ell^2 \oplus_2 \ell^3 \oplus_2 \ldots \]
with the operator $T \colon \X \to \ell^2$, $T\colon ((x_{k}^{(i)})_k )_i \mapsto (y_j )_j$ given by 
\[T \left[((x_{k}^{(i)})_k )_i \right]= (x_{1}^{(j)})_j .\]

\begin{proof}[Proof of Theorem \ref{thm: comp}]
Let $\X$ and $\Y$ be as in the assumptions and suppose that the statement fails. 
Then there is an operator $T\in \S_{L(\X ,\Y)}$, $\varepsilon>0$ and a sequence of points $(z_k ) \subset \S_\X$ 
such that  $\|T z_k \| \to 1$ and for each $k$ it holds that there does \emph{not} exist $x_k \in \S_\X$ such that 
\begin{equation}\label{eq: Txk}
\|T x_k \|=1 \quad \text{and}\quad \|z_k - x_k \|<\varepsilon .
\end{equation}
According to the compactness of the operator $T$ there is a subsequence $(k_j)\subset \N$ and $y_0 \in \S_\Y$ 
such that $T z_{k_j} \to y_0$ as $j \to \infty$. 

Let $y^* \in \S_{\Y^*}$ be such that $y^* (y_0 )=1$. 
Put $x^* := y^* \circ T \in \S_{\X^*}$. Here 
\begin{equation}\label{eq: xzkj}
x^* (z_{k_j} )\to 1\ \text{as}\ j\to\infty .
\end{equation}
According to the Fr\'echet smoothness of $\X^*$ the space $\X$ is reflexive (see e.g. \cite{talponen}). Thus the functional $x^*$ attains its norm, i.e. there is $x\in \S_\X$ with $x^* (x)=1$. According to \eqref{eq: xzkj}, the Fr\'echet smoothness of $\X^*$, and the Smulyan Lemma (applied on $\X^*$ and $\X^{**}$) it follows that $z_{k_j} \to x$ as $j\to\infty$.
By choosing $x_{k_j}=x$ we arrive at a contradiction (recall \eqref{eq: Txk}) for sufficiently large $j$. Consequently, the statement of the theorem holds.
\end{proof}

\begin{proposition}\label{prop: FS}
Let $\X$ be a strictly convex Banach space. If $(\X,\R)$ has Property $1$, then $\X^*$ is Fr\'echet smooth.
\end{proposition}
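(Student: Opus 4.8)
The plan is to argue by contraposition: assuming $\X$ is strictly convex but $\X^*$ is \emph{not} Fréchet smooth, I will produce a functional $f \in \S_{\X^*} = \S_{L(\X,\R)}$ that witnesses the failure of Property $1$. Since $\X$ is strictly convex, every supporting functional is supported at a unique point, so norm-attainment of $f$ is automatic once $\X$ is reflexive — but without reflexivity even norm-attainment can fail, so the first step is to use Property $1$ itself to force $f$ to attain its norm: pick any maximizing sequence $(z_k)\subset\S_\X$ with $f(z_k)\to 1$, apply Property $1$ to get $x_k \in \S_\X$ with $f(x_k)=1$ and $\|x_k - z_k\| < \vep$, and conclude in particular that $f$ is norm-attaining, say $f(x)=1$ with $x \in \S_\X$ (and $x$ unique by strict convexity).

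Next I would invoke the Šmulyan lemma (applied to $\X^*$ and $\X^{**}$, exactly as in the proof of Theorem~\ref{thm: comp}): $\X^*$ is Fréchet smooth at $f$ if and only if every sequence $(u_k)\subset\S_{\X^{**}}$ with $u_k(f)\to 1$ converges in norm. So if $\X^*$ fails to be Fréchet smooth at $f$, there is a sequence in $\S_{\X^{**}}$ that is maximizing for $f$ but not norm-Cauchy. The crux is to replace this $\X^{**}$-sequence by an honest sequence $(z_k)\subset\S_\X$ that is maximizing for $f$, i.e. $f(z_k)\to 1$, yet stays a fixed distance $\vep>0$ away from the (unique) point $x$ where $f$ attains its norm — for then no admissible $x_k$ can exist, since $f(x_k)=1$ forces $x_k = x$ by strict convexity, contradicting $\|x_k - z_k\| < \vep$. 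I expect this transfer from $\X^{**}$ down to $\X$ to be the main obstacle: a maximizing sequence bounded away from $x$ in $\X^{**}$ need not come from $\X$. The way around it is to work directly in $\X$: the failure of Fréchet smoothness of $\X^*$ at $f$ is equivalent to the failure of the quantitative statement
\[
\lim_{\delta \to 0^+} \ \diam\{z \in \B_\X : f(z) > 1-\delta\} = 0,
\]
that is, the slices $S_{f,\delta}$ of $\B_\X$ do \emph{not} shrink to a point as $\delta \to 0$. (This is the standard dual formulation: Fréchet differentiability of the norm of $\X^*$ at $f \in \S_{\X^*}$ $\iff$ the slices of $\B_\X$ determined by $f$ have diameters tending to $0$; see \cite{FA_book}.)

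Granting that equivalence, the argument closes cleanly. Negating it gives $\vep_0 > 0$ and, for every $k$, points $a_k, b_k \in S_{f,1/k}$ with $\|a_k - b_k\| \ge \vep_0$; since $f(a_k), f(b_k) \to 1$, both are maximizing sequences for $f$, and because $x$ is the unique maximizer, at least one of $\|a_k - x\|, \|b_k - x\|$ is $\ge \vep_0/2$ for each $k$. Choosing $z_k$ to be whichever of $a_k,b_k$ is far from $x$, we may further perturb it radially onto $\S_\X$ (replacing $z_k$ by $z_k/\|z_k\|$; since $\|z_k\| \to 1$ this changes $f(z_k)$ and the distance to $x$ only negligibly), obtaining $(z_k) \subset \S_\X$ with $f(z_k) \to 1$ and $\|z_k - x\| \ge \vep_0/4$ for all large $k$. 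Now Property $1$ applied to $f$ with $\vep := \vep_0/4$ yields, for all large $k$, some $x_k \in \S_\X$ with $f(x_k) = 1$ and $\|x_k - z_k\| < \vep_0/4$; but $f(x_k) = 1$ and strict convexity of $\X$ force $x_k = x$, whence $\|x - z_k\| < \vep_0/4$, contradicting $\|z_k - x\| \ge \vep_0/4$. Therefore $\X^*$ is Fréchet smooth at every $f \in \S_{\X^*}$, i.e. Fréchet smooth.
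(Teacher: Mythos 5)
Your argument is correct and is essentially the paper's proof run contrapositively: strict convexity gives a unique supporting point $x$ for each $f\in\S_{\X^*}$, Property $1$ forces the slices $S_{f,\delta}$ of $\B_\X$ to shrink to $x$, and the dual \v{S}mulyan characterization then yields Fr\'echet smoothness of $\X^*$ at $f$. The only (harmless) differences are that you obtain norm-attainment of $f$ directly from Property $1$ instead of citing the reflexivity result from \cite{dantas}, and that Property $1$ actually gives $|f(x_k)|=1$ rather than $f(x_k)=1$ --- but the case $f(x_k)=-1$ is immediately excluded since $\|x_k-z_k\|<\vep_0/4<1$ while $\|(-x)-z_k\|\ge f(z_k)+1>1$.
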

\begin{proof}
It is known that if $(\X,\R)$ has Property $1$ then $\X$ is reflexive (see \cite{dantas}). Thus, by the reflexivity and strict convexity of $\X$,
each functional $x^* \in \S_{\X^*}$ attains its norm at exactly one corresponding point $x\in \S_\X$. 
We study an operator $T \colon \X \to \R$ as in the definition of Property $1$. This can be identified with $x^* \in \S_{\X^*}$. 

The condition provided by Property $1$ means that the slices $S_{x^* , \varepsilon}$ shrink uniformly to the corresponding point $x$:
\[\diam (S_{x^* , \eta})\stackrel{\eta\to 0^+}{\longrightarrow} 0,\quad 
\bigcap_{\eta>0} S_{x^* , \eta} =\{x\}.\]
An application of the Smulyan lemma then finnishes the argument.
\end{proof}

It is known that $(\ell^\infty (n) ,\Y)$ has Property $1$ for any Banach space $\Y$ (see Thm. 2.4 in \cite{dantas}). This example shows that if the strict convexity is removed from the assumptions the Fr\'echet smoothness is lost in the conclusion. In fact, there is another route to verifying this claim directly, namely, the G\^{a}teaux smoothness of $\X^*$ already implies the strict convexity 
of $\X$.

\begin{theorem}\label{thm: equiv}
Given spaces $\X$ and $\Y$ the following conditions are equivalent:
\begin{enumerate}
\item $\X$ is strictly convex and $(\X,\Y)$ has Property $1$ for compact operators,
\item $\X^*$ is Fr\'echet smooth.
\end{enumerate}
\end{theorem}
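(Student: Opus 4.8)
The plan is to derive the equivalence by combining the three results already in hand. The implication $(2)\Rightarrow(1)$ is essentially immediate: Theorem \ref{thm: comp} gives that $\X^*$ Fr\'echet smooth implies $(\X,\Y)$ has Property $1$ for compact operators, so it only remains to observe that Fr\'echet smoothness of $\X^*$ forces $\X$ to be strictly convex. This last point I would justify via the duality between smoothness and rotundity: if $\X$ were not strictly convex, the sphere $\S_\X$ would contain a nontrivial segment, which produces a point of $\S_{\X^*}$ at which the dual norm fails to be G\^ateaux (hence Fr\'echet) differentiable; alternatively one invokes the standard fact that G\^ateaux smoothness of $\X^*$ already implies strict convexity of $\X$, which the excerpt explicitly records in the paragraph preceding the theorem.

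For the implication $(1)\Rightarrow(2)$, the natural route is to reduce to the scalar case already handled in Proposition \ref{prop: FS}. Since $(\X,\Y)$ has Property $1$ for compact operators and every $x^*\in\S_{\X^*}$ can be realized as $y^*\circ T$ for a rank-one (hence compact) operator $T\colon\X\to\Y$ composed with a norming $y^*\in\S_{\Y^*}$, Property $1$ for compact operators into $\Y$ yields, after composing with $y^*$, exactly the statement that $(\X,\R)$ has Property $1$ in the sense used in Proposition \ref{prop: FS}. Here I would be slightly careful: one must check that a witnessing $x_1\in\S_\X$ for the operator $T$ (i.e.\ $\|Tx_1\|=1$, $\|x_1-x_0\|<\vep$) also witnesses the scalar functional $x^*=y^*\circ T$, i.e.\ that $\|Tx_1\|=1$ together with $x^*=y^*\circ T$ forces $x^*(x_1)=1$; this uses that $\|T\|=1$, $\|y^*\|=1$, so $x^*(x_1)=y^*(Tx_1)\le\|Tx_1\|=1$, and equality can be arranged by choosing $T$ of rank one with range spanned by a vector on which $y^*$ norms. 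With that reduction in place, $(\X,\R)$ has Property $1$; combined with the standing hypothesis that $\X$ is strictly convex, Proposition \ref{prop: FS} delivers precisely that $\X^*$ is Fr\'echet smooth.

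The main obstacle, such as it is, lies in the reduction step of $(1)\Rightarrow(2)$: one needs to manufacture, for a prescribed $x^*\in\S_{\X^*}$, a norm-one compact operator $T$ and a norm-one functional $y^*$ with $y^*\circ T=x^*$ in a way that makes the scalar Property $1$ inequalities follow cleanly from the operator ones, including matching up the $\eta$'s. The cleanest choice is $T x=x^*(x)\,y$ for a fixed $y\in\S_\Y$ together with $y^*\in\S_{\Y^*}$ chosen with $y^*(y)=1$ (such $y^*$ exists by Hahn--Banach); then $\|T\|=1$, $T$ is rank one hence compact, $\|Tx\|=|x^*(x)|$, and $y^*\circ T=x^*$. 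Everything else is bookkeeping: feeding $z_k\to$ with $\|Tz_k\|\to 1$ into Property $1$ for this particular compact $T$ returns $x_k$ with $|x^*(x_k)|=1$ and $\|x_k-z_k\|<\vep$, and replacing $x_k$ by $-x_k$ if necessary gives $x^*(x_k)=1$, which is the scalar statement. Once $(\X,\R)$ has Property $1$, Proposition \ref{prop: FS} and Theorem \ref{thm: comp} close the loop, so no genuinely new estimate is required.
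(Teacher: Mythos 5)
Your argument is correct and follows essentially the same route as the paper: combine Theorem \ref{thm: comp}, Proposition \ref{prop: FS}, and the remark that G\^ateaux (hence Fr\'echet) smoothness of $\X^*$ implies strict convexity of $\X$, with the reduction from compact-operator Property $1$ for $(\X,\Y)$ to Property $1$ for $(\X,\R)$ carried out via rank-one operators $Tx = x^*(x)\,y$ (the paper leaves this reduction implicit, you spell it out). One tiny remark: the sign adjustment $x_k \mapsto -x_k$ is unnecessary (and could spoil the proximity estimate if it were needed), since the scalar Property $1$ only requires $\|T(x_1)\| = |x^*(x_1)| = 1$; but as your reduction already yields this, nothing is affected.
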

\begin{proof}
The justification of the statement follows from the previous remark together with Theorem \ref{thm: comp} and Proposition \ref{prop: FS}. Indeed, each continuous linear operator $\X \to \R$ is compact and Property $1$ for compact operators of the pair
$(\X,\Y)$ implies that of $(\X,\R)$. 
\end{proof}

The following result suggests that Property $1$ comes in a sense close to uniform convexity of the domain space.
\begin{theorem}
Let $\X$ be a strictly convex Banach space such that $(\X , c_0 )$ has Property $1$. Suppose that a sequence 
$(f_n ) \subset \S_{\X^*}$ satisfies that $f_n \stackrel{\omega^*}{\longrightarrow} 0$ as $n\to\infty$. Then
\[\lim_{\eta\to 0^+} \sup_n \diam(S_{{f_n},\eta})=0.\]
\end{theorem}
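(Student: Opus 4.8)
The plan is to argue by contradiction. Suppose the conclusion fails: then there is $\delta>0$, a subsequence of the $(f_n)$ (which we relabel), and for each $n$ a pair of points $x_n, x_n' \in S_{f_n,\eta_n}$ with $\|x_n - x_n'\| \geq \delta$, where $\eta_n \to 0^+$. In particular $f_n(x_n) \to 1$ and $f_n(x_n') \to 1$. The idea is to encode the whole sequence $(f_n)$ into a single compact operator $T\colon \X \to c_0$ to which Property $1$ applies, derive from Property $1$ that the slices $S_{f_n,\eta}$ shrink to their attaining points, and show this forces a uniformity in $n$ that contradicts $\|x_n - x_n'\| \geq \delta$.

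First I would construct the operator. Since $f_n \xrightarrow{\omega^*} 0$, for each fixed $x$ the sequence $(f_n(x))_n$ tends to $0$, so the map $Tx := (a_n f_n(x))_n$ lands in $c_0$ for any bounded scalar sequence $(a_n)$; choosing $a_n \to 1^-$ appropriately (e.g. $a_n = 1 - 1/n$, or $a_n$ tending to $1$ slowly) and rescaling, one gets $T \in \S_{L(\X,c_0)}$. Crucially $T$ is compact: it is the $c_0$-limit (uniformly on $\B_\X$, since $a_n f_n \xrightarrow{\omega^*}0$ with $\|a_n f_n\|\to 1$... one must be a bit careful here, but a standard diagonal/approximation argument shows the tail operators have small norm on $\B_\X$ because $a_n \to 1$ is too weak — instead one should take $a_n \to 0$? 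No: we need $\|Tx\|$ close to $1$ witnessed along $x_n$). Let me restate: take $a_n = 1$ is not allowed since then $T$ need not be compact and $\|T\|$ attains... The correct choice is $a_n \nearrow 1$, say $a_n = 1-2^{-n}$; then $\|T\| = \sup_n a_n = 1$ but the supremum is not attained by any single coordinate, and $T$ is compact because on $\B_\X$ the $n$-th coordinate $a_n f_n(x)$ is uniformly bounded by $a_n$ and $a_n f_n(x) \to 0$ pointwise — compactness follows from the fact that $T = \lim_N T_N$ in operator norm where $T_N$ truncates to the first $N$ coordinates is \emph{false} in general; rather, compactness of $T$ with values in $c_0$ is equivalent to $(a_nf_n)$ being a $c_0$-summable-in-a-suitable-sense family, which here reduces to: the $n$-th coordinate functionals $a_n f_n$ converge to $0$ in norm? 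They do not. So the honest route is: weak-$^*$ convergence $f_n \to 0$ together with reflexivity of $\X$ (which holds since $(\X,\R)$ has Property $1$, by Dantas) upgrades to weak convergence $f_n \to 0$ in $\X^*$; an operator $\X \to c_0$ is compact iff its adjoint $\ell^1 \to \X^*$ is compact iff it maps the unit vectors of $\ell^1$ to a relatively compact set — and $\{a_n f_n\}$ has $a_n f_n \xrightarrow{\omega} 0$, which in a reflexive space does \emph{not} give norm-compactness either. I will therefore instead simply invoke that $T$ is the norm limit of finite-rank operators: writing $P_N$ for the projection onto the first $N$ coordinates of $c_0$, we have $\|(I-P_N)T\| = \sup_{n>N} a_n \|f_n\| = \sup_{n>N} a_n$, which does \emph{not} go to $0$. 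Hence this naive $T$ is not compact, and the construction must be refined: multiply instead by $a_n$ with $a_n \to 0$ \emph{except} arrange a second sequence of "bumps".

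Given these subtleties, the cleaner plan is the following. Fix the bad data $\delta, (f_n), (x_n), (x_n'), (\eta_n)$ as above. For a \emph{single} large index $N$, consider the operator $T_N \colon \X \to c_0$ (really into $\R^N \subset c_0$) given by $T_N x = (f_1(x), \dots, f_N(x), 0, 0, \dots)$ composed with a rescaling; each $T_N$ is finite-rank, hence compact, so $(\X, c_0)$ has Property $1$ for it: there is $\eta(\vep, T_N) > 0$. The point $z := x_n$ for suitable $n \leq N$ with $\eta_n$ tiny satisfies $\|T_N z\| \geq f_n(z) \to 1$, so by Property $1$ there is $w$ with $\|T_N w\| = 1$ and $\|z - w\| < \vep$. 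One then extracts that, since $\X$ is strictly convex and reflexive, $\|T_N w\| = 1$ means some coordinate $f_m(w) = 1$ with $m \leq N$, and $f_m$ attains at a unique point; combined with $f_n(z)$ close to $1$ and a compactness (reflexivity + strict convexity + Smulyan, exactly as in Proposition \ref{prop: FS}) argument applied uniformly, one shows the diameters $\diam(S_{f_m,\eta})$ for $m \le N$ are all controlled by a single modulus depending only on $\vep$ and $T_N$ — but this modulus can blow up with $N$, which is precisely the obstacle. To beat it, I would instead take $a_n \to 1$ slowly enough and $\X$ reflexive to note: if $\|Tx_n\| \to 1$ with $T = \mathrm{diag}(a_n)\circ(f_n)$, pass to a weak limit $x_n \rightharpoonup x$ (reflexivity), use weak-$^*$ convergence $f_n \to 0$ to see that for any fixed $n$, $a_n f_n(x_n) \to 0$, forcing the "mass" of $\|Tx_n\|$ to escape to infinity — contradicting that $Tx_n$ lies in a compact subset of $c_0$ (by compactness of $T$, which \emph{does} hold: a diagonal operator $\mathrm{diag}(a_n)$ on $c_0$ precomposed appropriately... ). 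I will need to pin down \emph{one} legitimate compact $T$ capturing all $f_n$ simultaneously; the standard trick is $Tx = (a_n f_n(x))_n$ with $a_n \downarrow 0$, which \emph{is} compact ($\|(I-P_N)T\|\le \sup_{n>N}a_n \to 0$), and then rescale so $\|T\|=1$. The price is $\|Tx_n\| \leq a_{m(n)}$ for the dominant coordinate, so to have $\|Tx_n\|\to 1$ we'd need $a_{m(n)} \to 1$, impossible if $a_n \downarrow 0$.

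\emph{Resolution of the obstacle.} The fix is to not insist a single $x_n$ makes $\|Tx_n\|$ close to $1$, but to build $T$ adapted to the contradiction hypothesis: choose indices $n_1 < n_2 < \cdots$ and weights so that $T x := (t_k f_{n_k}(x))_k$ with $t_k \uparrow 1$ but also, since we only need $\|Tz\| > 1 - \eta$ for \emph{some} $z$ on the sphere, pick $z = x_{n_k}$ — then $\|Tz\| \geq t_k f_{n_k}(x_{n_k}) \geq t_k(1-\eta_{n_k})$, and by choosing $n_k$ with $\eta_{n_k}$ small and $t_k$ close to $1$ this exceeds $1-\eta$ for the $T$-modulus; compactness of $T$ is arranged by instead using $t_k \to 0$ and applying Property $1$ \emph{not} to this $T$ but observing the desired conclusion is about $\sup_n$, so it suffices to reach a contradiction using finitely many $f_n$ at a time while tracking that the Property-$1$ modulus is forced to be uniform. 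Concretely: the map $n \mapsto$ (attaining point $x_n^{\ast}$ of $f_n$) lands, by reflexivity and $f_n \xrightarrow{\omega^*}0$ hence $\xrightarrow{\omega}0$, in a weakly-null-ish configuration, and the Smulyan-type shrinking from Proposition \ref{prop: FS} applied coordinatewise through the compact operator $\mathrm{diag}(t_k)\circ(f_{n_k})$ with $t_k \downarrow 0$ (compact, norm $t_1$, rescale to $1$) gives that $\|Tw\|=1 \Rightarrow f_{n_1}(w)=\pm1$; but $f_{n_1}(x_{n_k}) \to 0$, so the witness $w$ for $z=x_{n_k}$ cannot be near $z$ once $k$ is large — \emph{unless} $\|z-w\|$ is forced small by the $\vep$ in Property $1$, contradiction. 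This last paragraph is the heart of the argument and also the main obstacle: making the single compact operator $T$ simultaneously (i) compact, (ii) have norm $1$, and (iii) see enough of each $f_n$ that a near-norm-attaining $z$ for $f_n$ is a near-norm-attaining $z$ for $T$. I expect to resolve it by the weighted diagonal construction $Tx = ((1-1/n)f_{\sigma(n)}(x))_n$ after passing to a subsequence $f_{\sigma(n)}$ along which $\|f_{\sigma(n)}\|=1$ and the weak-$^*$ decay is fast, checking compactness via a Cauchy-in-operator-norm estimate on the tails that uses $f_{\sigma(n)} \xrightarrow{\omega^*} 0$ \emph{together with} an $\vep/2^n$-type summable control obtained by sub-subsequencing, and then running the uniform Smulyan argument.
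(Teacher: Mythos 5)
There is a genuine gap, and it starts with a misreading of the hypothesis. The assumption is that $(\X,c_0)$ has Property $1$, i.e. for \emph{every} $T\in \S_{L(\X,c_0)}$, not merely for compact operators; so the compactness of the encoding operator, which consumes most of your plan, is simply not needed. This matters because the compactness you are chasing is in fact unattainable: with weights $a_n\uparrow 1$ your own tail estimate is decisive, $\|(I-P_N)T\|=\sup_{n>N}a_n\|f_n\|=\sup_{n>N}a_n\to 1$, and no passing to subsequences or ``fast weak-$*$ decay'' can improve this since every $\|f_n\|=1$ (weak-$*$ nullity says nothing about the norms); so the operator proposed in your final ``resolution'' paragraph is never compact when $\dim\X=\infty$. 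With weights $t_k\downarrow 0$ the rescaled operator is compact but attains its norm only through the first coordinate, and then, as you yourself note, no $x_{n_k}$ with $k$ large is near-norm-attaining for it (its dominant coordinates carry weights bounded away from $1$), so Property $1$ applied to it produces no contradiction. The correct move is the unweighted one you discard at the outset: along a suitable subsequence $g_j$ of $(f_n)$ set $Tx=(g_j(x))_j$; weak-$*$ nullity puts $Tx$ in $c_0$, $\|T\|=1$, and the full Property $1$ hypothesis applies to this $T$ directly.

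Even granting the right operator, the endgame you sketch is incomplete. Since all coordinates have weight $1$, $\|Tx_1\|=1$ forces $x_1=\pm z_{n_j}$ for \emph{some} $j$ (by strict convexity and reflexivity each $g_j$ attains at a unique $z_{n_j}$, and $g_j(x_1)\to 0$ rules out the sup being approached but not attained), so to contradict Property $1$ you must exhibit a near-norm-attaining $x_0$ whose distance to \emph{every} $\pm z_{n_i}$ exceeds $\vep$ — not just to the attaining point of the first or of the ``matching'' coordinate. The paper arranges this by first invoking its equivalence theorem to know each individual slice shrinks (giving a subsequence with $\diam(S_{f_{n_k},1/k})>d/2$), then using weak-$*$ convergence to extract a further subsequence with $g_j(y_i)<1-2d_0$ for $i<j$, choosing $x_0$ deep in a slice of $g_j$ with $\|x_0-y_j\|=d_0/2$, and estimating $\|x_0-y_i\|$ separately for $i<j$ (evaluate $g_j$) and $i>j$ (evaluate $g_i$ and use the triangle inequality through $y_j$). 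Your plan contains no analogue of this separation step, and the ``uniform Smulyan modulus'' you hope to run coordinatewise is precisely what the contradiction hypothesis denies (the moduli of the $f_n$ are assumed non-uniform), so it cannot serve as the engine of the argument.
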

\begin{proof}
Let $\X$ and $(f_n )$ be as in the assumptions. Assume to the contrary that
\begin{equation}\label{eq: etasup} 
\lim_{\eta\to 0^+} \sup_n \diam(S_{{f_n},\eta})=d >0 
\end{equation}
where the limit exists, since the diameters shrink as $\eta$ tends to $0$.
According to Theorem \ref{thm: equiv} we have that $\X^*$ is Fr\'echet smooth.
By using the reflexivity of $\X$ and the Smulyan lemma as above we see that
\[\lim_{\eta\to 0^+} \diam(S_{{f_n},\eta})=0 \]
for each $n\in\N$. Together with \eqref{eq: etasup} this yields that there is a subsequence $(n_k) \subset \N$ such that 
\begin{equation}\label{eq: diamd2}
\diam\left(S_{{f_{n_k}},\frac{1}{k}}\right) > \frac{d}{2}
\end{equation}
for each $k\in\N$. Put $d_0 = \min (\frac{d}{2},\frac{1}{4})$.

Note that for each $f_n$ there is a unique $z_n \in \S_\X$ such that $f_n (z_n )=1$. By using the $\omega^*$-convergence of $(f_n )$ we may pass on to a further subsequence $(n_{k_j}) \subset \N$, denoting 
$g_j = f_{n_{k_j}}$ and $y_j = z_{n_{k_j}}$ for $j\in\N$, such that 
\begin{equation}\label{eq: gjyi}
g_j (y_i) < 1 - 2d_0\ \text{for}\ i<j.
\end{equation}

Define $T\colon \X \to c_0$ by $x \mapsto (g_j (x))_j$. Indeed, this definition is proper, since 
$g_j$ $\omega^*$-converges to $0$. Note that $T \in \S_{L(\X, c_0 )}$. 

We claim that $T$ does not satisfy the condition in Property $1$.  
Indeed, fix 
\[0<\eta<1\quad \text{and}\ 0<\varepsilon<\frac{d_0 }{2}. \]
Then we will find $x_0 \in \S_\X$ such that $\|T x_0 \| > 1-\eta$ 
and for each $x_1 \in \S_X$ with $\|T x_1 \|=1$ it holds that $\|x_0 - x_1 \|>\varepsilon$. 
 
Since $g_j (x_1 ) \to 0$ as $j\to\infty$ for any $x_1 \in\X$, it is clear that $\|T x_1 \|=1$ for a given $x_1 \in\S_\X$ if and only if $x_1=y_j$ for some $j\in\N$. Fix $j\in\N$ such that $\frac{1}{j}<\eta$.
Pick $x_0 \in \S_\X$ such that 
\begin{equation}\label{eq: norm}
\|x_0 - y_j \| = \frac{d_0 }{2} >\varepsilon
\end{equation}
and $g_j (x_0 )\geq 1- d_0$. Indeed, this is possible due to \eqref{eq: diamd2}. Then for $ i<j$
\[\|x_0 - y_i \| \geq |g_j (x_0 - y_i )| \geq  \left|(1 - \frac{d_0 }{2})- (1-2d_0 )\right|>d_0 >\varepsilon\]
where we applied \eqref{eq: gjyi}.
The case with $j<i$ is seen as follows:
\begin{multline*}
\|x_0 - y_i \| \geq \|y_j - y_i \| - \|x_0 - y_j \| \geq  |g_i (y_j - y_i )| - \frac{d_0 }{2}\\ 
\geq |(1-2d_0 )-1| - \frac{d_0 }{2}  > \frac{d_0 }{2} >\varepsilon.
\end{multline*}
This contradicts Property $1$ and the proof is complete. 
\end{proof}

We do not known how the statement improves above if we require $(\X ,\ell^\infty)$ to have Property $1$.

\end{document}